\documentclass[a4paper,reqno]{amsart}
\usepackage{amssymb, amsmath, amscd}
\usepackage{color}

\numberwithin{equation}{section}
\newtheorem{thm}{Theorem}

\pagestyle{plain}

\begin{document}

\title{\bf Correction to ``minimal unit vector fields"}
\author{S. H. Chun, J.  H. Park, and K. Sekigawa}
\address{SHC, JHP:
Department of Mathematics, Sungkyunkwan University,
 Suwon 440-746, Korea. Email: cshyang@skku.edu and parkj@skku.edu.}
 \address{KS: Department of Mathematics, Faculty of Science,
 Niigata University, Niigata, 950-2181, Japan. Email:
 sekigawa@math.sc.niigata-u.ac.jp.}
 \date{}

\maketitle

\footnote{2000 \textit{Mathematics Subject Classification:} 53C25, 53D10}
\footnote{\textit{Keywords:}
minimal vector field}

\setcounter{section}{1}

The paper ``Minimal unit vector fields" by O. Gil-Medrano and E. Llinares-Fuster \cite{GilLli1}.
is a seminal paper in the field that has been cited by many authors -- see, for example,
\cite{B10,F09,H10,P10,P09,Y10} to give just a few of the recent citations. It contains, however,
a minor technical mistake in Theorem 14 that is important to fix.
In this short note, we will provide a correction to
that result. We begin by establishing the requisite notation. Let
$(M,g)$ be a Riemannian manifold Assume that there exists a unit
Killing vector field $V$ on $M$. Let $\nabla$ be the Levi-Civita
connection of $g$. Set:
$$\begin{array}{ll}
L_V=\operatorname{id}+(\nabla V)^t\circ\nabla V,&
f(v):=\sqrt{\det(L_V)},\\
K_V:=f(V)L_V^{-1}\circ(\nabla V)^t,&
\omega_V(X):=\operatorname{Tr}(Z\rightarrow\nabla_ZK_V(X))\,.\vphantom{\vrule height 12pt}
\end{array}$$
Since $V$ is a Killing vector field, the rank of $\nabla V$ must be
even. We further normalize the choice of frame so that $\nabla
V(E_i)=-\lambda_iE_{i^\star}$ and $\nabla V(E_{i^*})=\lambda_iE_i$
for $i\in\{1,...,m\}$ and $\nabla V(E_\alpha)=0$ for
$2m+1\le\alpha\le n$. Thus we can take our frame to be:
$$\{E_1,E_{1^\star},...,E_{m},E_{m^\star},...,E_n=V\}\,.$$
We adopt the sign convention for the curvature given in \cite{GilLli1}, namely:
$$R(x,y,z,w)=-g((\nabla_x\nabla_y-\nabla_y\nabla_x-\nabla_{[x,y]})z,w)\,.$$
 In \cite{GilLli1} the authors stated as Theorem 14 the following result:
\begin{thm}\label{Gilthm}
Let $V$ be a unit Killing vector field,
then $\omega_V=f\tilde\rho_V$, where $\tilde\rho_V(X)$
is defined to be
$$\sum_{j=1}^n({R}((L_V^{-1}\circ\nabla V)(X), (L_V^{-1}\circ\nabla V)(E_j),V,E_j)
+{R}(L_V^{-1}(X), L_V^{-1}(E_j),V,E_j)).$$
Consequently, $V$ is minimal if and only if the 1-form $\tilde\rho_V$ annihilates
$\mathcal{H}^V$.
\end{thm}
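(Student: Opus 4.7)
The strategy is to compute $\omega_V(X)$ directly from its definition by expanding $K_V = f\,L_V^{-1}\!\circ\!(\nabla V)^t$ via the Leibniz rule and systematically converting every derivative of $\nabla V$ into a curvature term through the Killing identity.

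I would fix a point $p \in M$, use the adapted orthonormal frame from the excerpt, and extend $X$ so that $\nabla X|_p = 0$; the trace then reduces to $\omega_V(X) = \sum_j g\bigl(\nabla_{E_j}(K_V X),\,E_j\bigr)$ at $p$. Writing $A := \nabla V$ (so that $A^t = -A$ and $L_V = \operatorname{id} - A^2$ by the Killing property), the Leibniz rule applied to $K_V = -f\,L_V^{-1} A$ breaks $\omega_V(X)$ into three contributions, coming from $E_j(f)$, from $\nabla_{E_j} L_V^{-1}$, and from $\nabla_{E_j} A$. For the middle piece I would use $\nabla_{E_j} L_V^{-1} = -L_V^{-1}(\nabla_{E_j} L_V) L_V^{-1}$ together with $\nabla_{E_j} L_V = -(\nabla_{E_j} A)A - A(\nabla_{E_j} A)$, so that this piece too becomes polynomial in $\nabla A$. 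For the first I would invoke Jacobi's formula $E_j(f) = \tfrac12\,f\,\operatorname{Tr}\bigl(L_V^{-1}(\nabla_{E_j} L_V)\bigr)$, which likewise produces $\nabla A$ traces.

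At this stage every summand in $\omega_V(X)$ is a trace of a polynomial in $L_V^{-1}$, $A$, and $\nabla A$. Invoking the Killing identity $(\nabla_X \nabla V)(Y) = R(V,X)Y$, which in the sign convention of the paper reads $g\bigl((\nabla_X A)Y,\,W\bigr) = -R(V,X,Y,W)$, turns every $\nabla A$ factor into a curvature operator. The remaining work is to rearrange using the first Bianchi identity, the pair-swap symmetry $R(a,b,c,d) = R(c,d,a,b)$, the skew-symmetry of $A$, and the self-adjointness of $L_V$ and $L_V^{-1}$, and to group the resulting sums into the two curvature traces appearing in $\tilde\rho_V$.

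I expect the main obstacle to be precisely this final regrouping: tracking how several separate curvature traces collapse into just two. The term $R\bigl((L_V^{-1}\!\circ\!\nabla V)(X),(L_V^{-1}\!\circ\!\nabla V)(E_j),V,E_j\bigr)$ is the straightforward one, arising from the $(\nabla_{E_j} A)(X)$ contribution of (iii). The second term $R\bigl(L_V^{-1}(X),L_V^{-1}(E_j),V,E_j\bigr)$ is the correction to \cite{GilLli1}; identifying its origin in the $E_j(f)$ and $\nabla_{E_j} L_V^{-1}$ contributions — rather than dropping it, as in the original argument — is the point at which the fix enters. Once $\omega_V = f\,\tilde\rho_V$ is established, the final assertion is immediate: since $f > 0$ and, as is used elsewhere in \cite{GilLli1}, $V$ is minimal exactly when $\omega_V$ vanishes on $\mathcal{H}^V$, this is equivalent to $\tilde\rho_V$ annihilating $\mathcal{H}^V$.
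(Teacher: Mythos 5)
The statement you are trying to prove is the uncorrected Theorem 14 of \cite{GilLli1}; it is false, and demonstrating that is the entire point of this note. Your computational plan --- expand $\omega_V(X)=\sum_j g(\nabla_{E_j}(K_VX),E_j)$ by the Leibniz rule into the $E_j(f)$, $\nabla_{E_j}L_V^{-1}$ and $\nabla_{E_j}(\nabla V)$ contributions and convert each occurrence of $\nabla(\nabla V)$ into curvature via the Killing identity --- is essentially the computation carried out here (in frame components, via Lemma 12 of \cite{GilLli1}), but if you execute it you will not arrive at $\omega_V=f\tilde\rho_V$. Two discrepancies appear at precisely the ``final regrouping'' step that you identify as the main obstacle but never perform. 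First, the second curvature trace comes out with the opposite sign: the correct summand is $-R(L_V^{-1}X,L_V^{-1}E_j,V,E_j)$. This is visible already on the directions $E_\alpha$ on which $\nabla V$ vanishes: there the first trace drops out, the frame computation gives $\tfrac1f\omega_V(E_\alpha)=-\sum_j\tfrac{\lambda_j}{1+\lambda_j^2}(G_{j^*\alpha}^{j}-G_{j\alpha}^{j^*})$ (equation \eqref{e_alpha}), whereas the $\tilde\rho_V$ of the statement evaluates to $+\sum_j\tfrac{\lambda_j}{1+\lambda_j^2}(G_{j^*\alpha}^{j}-G_{j\alpha}^{j^*})$ (equation \eqref{rho alpha-2}). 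Second, the $E_j(f)$ and $\nabla_{E_j}L_V^{-1}$ pieces do not convert entirely into curvature: a first-order remainder $-((L_V^{-1}\circ\nabla V)X)f$ survives, equal for $X=E_i$ to $f\,\tfrac{2\lambda_i}{1+\lambda_i^2}\sum_j\tfrac{\lambda_j}{1+\lambda_j^2}E_{i^*}(\lambda_j)$ by \eqref{L1}, which is generically nonzero when the eigenvalues $\lambda_j$ of $\nabla V$ are non-constant.

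Your remark that the term $R(L_V^{-1}X,L_V^{-1}E_j,V,E_j)$ was ``dropped in the original argument'' misreads the situation: that term is already present in Theorem 14 of \cite{GilLli1}; the correction is a sign change on it together with the additional non-curvature term above. The true identity is $\omega_V(X)=f\tilde\rho_V(X)-((L_V^{-1}\circ\nabla V)X)f$ with $\tilde\rho_V$ redefined using the minus sign (Theorem \ref{corr}), and accordingly the closing claim of the statement --- that $V$ is minimal if and only if $\tilde\rho_V$ annihilates $\mathcal{H}^V$ --- must be replaced by the condition $f\tilde\rho_V(X)=((L_V^{-1}\circ\nabla V)X)f$ for all $X$ orthogonal to $V$. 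Any argument that does arrive at $\omega_V=f\tilde\rho_V$ as stated must therefore contain a sign or bookkeeping error in the regrouping stage.
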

Unfortunately, this is not quite correct. The correct result is as follows:
\begin{thm}\label{corr}
Let $V$ be a unit Killing vector field,
then $$\omega_V(X)=f\tilde\rho_V(X) -((L_V^{-1}\circ\nabla V)X)f\,,$$ where $\tilde\rho_V(X)$
is defined to be
$$\sum_{j=1}^n(R((L_V^{-1}\circ\nabla V)(X), (L_V^{-1}\circ\nabla V)(E_j),V,E_j)
{-R}(L_V^{-1}(X), L_V^{-1}(E_j),V,E_j)).$$
Consequently, $V$ is minimal if and only if $f\tilde\rho_V(X) =((L_V^{-1}\circ\nabla V)X)f$
for any vector field $X$ orthogonal to $V$.
\end{thm}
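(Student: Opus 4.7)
The plan is to compute $\omega_V(X) = \sum_{j=1}^n g((\nabla_{E_j} K_V)(X), E_j)$ directly by expanding $K_V = f\cdot L_V^{-1}\circ(\nabla V)^t$ with the Leibniz rule. The crucial algebraic observations, both special to the Killing case, are that $(\nabla V)^t = -\nabla V$, so that $L_V = \operatorname{id} - (\nabla V)^2$ commutes with $\nabla V$ (and hence so does $L_V^{-1}$), and that multiplying $\operatorname{id} = L_V + (\nabla V)^2$ on the left by $L_V^{-1}$ produces the algebraic identity
$$(\nabla V)\circ L_V^{-1}\circ(\nabla V) = L_V^{-1} - \operatorname{id}.$$
This identity is what will allow the various $\nabla V$-factors generated by differentiating $L_V^{-1}$ to be reabsorbed into powers of $L_V^{-1}$, and it is the main algebraic ingredient of the computation.

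The Leibniz expansion splits $\omega_V(X)$ into three contractions: a scalar-derivative term coming from $E_j(f)$, a term involving $\nabla_{E_j}L_V^{-1}$, and a term involving $\nabla_{E_j}(\nabla V)^t = -\nabla_{E_j}\nabla V$. Using $\operatorname{grad} f = \sum_j E_j(f)E_j$ and $(\nabla V)^t = -\nabla V$, the first piece collapses at once to
$$\sum_j E_j(f)\,g(L_V^{-1}(\nabla V)^t X, E_j) = -((L_V^{-1}\circ\nabla V)X)f,$$
which is precisely the correction term missed in the original Theorem~\ref{Gilthm}. For the other two pieces I invoke the standard Killing identity $g((\nabla_Z\nabla V)(Y),W) = -R(Z,V,Y,W)$ (in the paper's curvature convention) together with $\nabla_Z L_V^{-1} = L_V^{-1}\bigl[(\nabla_Z\nabla V)(\nabla V) + (\nabla V)(\nabla_Z\nabla V)\bigr]L_V^{-1}$, so that after contraction both pieces become sums of curvature components.

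The combinatorial core is now to collect these two remaining traces and simplify using (i) the algebraic identity above, (ii) the antisymmetry $g((\nabla V)Y,W) = -g(Y,(\nabla V)W)$, (iii) self-adjointness of $L_V^{-1}$, and (iv) the curvature symmetries $R(a,b,c,d) = R(c,d,a,b)$ and $R(a,b,c,d) = -R(a,b,d,c)$. After these manipulations the two sums collapse into
$$\sum_{j}\bigl[R(L_V^{-1}X, L_V^{-1}E_j, V, E_j) - R(L_V^{-1}\nabla V(X), L_V^{-1}\nabla V(E_j), V, E_j)\bigr] = -\tilde\rho_V(X),$$
giving $\omega_V(X) = f\tilde\rho_V(X) - ((L_V^{-1}\circ\nabla V)X)f$; the relative sign between the two $R$-terms here is exactly the one misprinted in Theorem~\ref{Gilthm}. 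The minimality statement then follows from the general criterion that $V$ is minimal if and only if $\omega_V$ annihilates $\mathcal{H}^V = V^\perp$. I expect the principal obstacle to be pure bookkeeping --- the paper's non-standard curvature sign, the distinction between $\nabla V$ and $(\nabla V)^t$, and correctly identifying the step at which $(\nabla V)L_V^{-1}(\nabla V) = L_V^{-1} - \operatorname{id}$ must be used --- since it is exactly at these points that the original computation went astray, producing both the sign error between the two terms of $\tilde\rho_V$ and the omission of the $f$-derivative correction.
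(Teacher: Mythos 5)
Your proposal is correct, and it takes a genuinely different route from the paper's. The paper works entirely in the adapted frame $\{E_1,E_{1^\star},\dots,E_n=V\}$: it imports the component formulas \eqref{e_i}--\eqref{e_alpha} for $\tfrac1f\omega_V$ from the proof of Theorem~14 of \cite{GilLli1}, expands the corrected $\tilde\rho_V$ on the same frame via Lemma~12 of \cite{GilLli1} (i.e.\ \eqref{Rijkn}, expressing $R_{jikn}$ through the connection coefficients $G_{ij}^k$), and identifies the componentwise discrepancy with $-\tfrac1f((L_V^{-1}\circ\nabla V)E_i)f$ using \eqref{L1}--\eqref{L3}; this has the advantage of exhibiting explicitly why the original Theorem~14 cannot hold, but it leans on formulas imported from \cite{GilLli1} without rederivation. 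Your computation is coordinate-free: the Leibniz expansion of $K_V=fL_V^{-1}\circ(\nabla V)^t$ isolates the $df$-term, which is visibly the missing correction $-((L_V^{-1}\circ\nabla V)X)f$, while the Kostant identity $g((\nabla_Z\nabla V)Y,W)=-R(Z,V,Y,W)$ together with $(\nabla V)\circ L_V^{-1}\circ(\nabla V)=L_V^{-1}-\operatorname{id}$ turns the remaining two traces into curvature sums. I verified that these do collapse as claimed: one of the three curvature sums produced by $\nabla_Z L_V^{-1}$ exactly cancels the entire $\nabla_Z(\nabla V)^t$-contribution (this is precisely where your operator identity is needed), and the survivors are $f\sum_j R((L_V^{-1}\nabla V)X,(L_V^{-1}\nabla V)E_j,V,E_j)-f\sum_j R(L_V^{-1}X,L_V^{-1}E_j,V,E_j)=f\tilde\rho_V(X)$. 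The one slip in your sketch is the displayed collapsed sum, which should equal $+\tilde\rho_V(X)$ rather than $-\tilde\rho_V(X)$ (your two $R$-terms are interchanged, i.e.\ the display is off by an overall sign), though your final formula for $\omega_V$ is the correct one. In short, your approach buys brevity, self-containedness, and a transparent explanation of both corrections (the omitted $df$-term and the relative sign inside $\tilde\rho_V$); the paper's approach buys an explicit term-by-term refutation of the original Theorem~14 while staying within the computational framework of \cite{GilLli1}.
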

\begin{proof} From the proof of Theorem 14 \cite{GilLli1}, we obtain
\begin{equation}\label{e_i}
\begin{split}
\frac{1}{f} \omega_V(E_i)=
&\frac{2\lambda_i}{1+\lambda_i^2}\sum_{j=1}^m\frac{\lambda_j}{1+\lambda_j^2}E_{i^*}(\lambda_j)
+\frac{1-\lambda_i^2}{(1+\lambda_i^2)^2}E_{i^*}(\lambda_i)\\
&+\frac{\lambda_i}{1+\lambda_i^2}\sum_{j=1}^{n-1}G_{ji^*}^j
+\sum_{j=1}^m \frac{\lambda_j}{1+\lambda_j^2}(G_{ji}^{j^*}-G_{j^*i}^{j}),
\end{split}
\end{equation}
\begin{equation}
\begin{split}\label{e_i*}
\frac{1}{f} \omega_V(E_{i^*})=
&-\frac{2\lambda_i}{1+\lambda_i^2}\sum_{j=1}^m\frac{\lambda_j}{1+\lambda_j^2}E_{i}(\lambda_j)
-\frac{1-\lambda_i^2}{(1+\lambda_i^2)^2}E_{i}(\lambda_i)\\
&-\frac{\lambda_i}{1+\lambda_i^2}\sum_{j=1}^{n-1}G_{ji}^j
-\sum_{j=1}^m
\frac{\lambda_j}{1+\lambda_j^2}(G_{j^*i^*}^{j}-G_{ji^*}^{j^*}),
\end{split}
\end{equation}
\begin{equation}\label{e_alpha}
\frac{1}{f} \omega_V(E_{\alpha})=
-\sum_{j=1}^m \frac{\lambda_j}{1+\lambda_j^2}(G_{j^*\alpha}^{j}-G_{j\alpha}^{j^*}).
\end{equation}
On the other hand, from the definition of $\tilde\rho_V$ in Theorem \ref{Gilthm}, we have
\begin{equation}\label{rho i}
\begin{split}
\tilde\rho_V (E_i)
=&-\frac1{1+\lambda_i^2}\sum_{j=1}^m \frac{1}{1+\lambda_j^2}({R}(E_i, E_j, E_j, V){+R}(E_i, E_{j^*}, E_{j^*},V))\\
&+\frac{\lambda_i}{1+\lambda_i^2}\sum_{j=1}^m \frac{\lambda_j}{1+\lambda_j^2}{R}(E_{j^*}, E_j, E_{i^*}, V)\\
&-\frac1{1+\lambda_i^2}\sum_{\beta=2m+1}^n {R}(E_i, E_\beta, E_\beta, V),
\end{split}
\end{equation}
\begin{equation}\label{rho i*}
\begin{split}
\tilde\rho_V (E_{i^*})
=&-\frac1{1+\lambda_i^2}\sum_{j=1}^m \frac{1}{1+\lambda_j^2}(R(E_{i^*}, E_j, E_j, V)+R(E_{i^*}, E_{j^*}, E_{j^*},V))\\
&-\frac{\lambda_i}{1+\lambda_i^2}\sum_{j=1}^m \frac{\lambda_j}{1+\lambda_j^2}{R}(E_{j^*}, E_j, E_i, V)\\
&-\frac1{1+\lambda_i^2}\sum_{\beta=2m+1}^n R(E_{i^*}, E_\beta, E_\beta, V),
\end{split}
\end{equation}
\begin{equation}\label{rho alpha}
\begin{split}
\tilde\rho_V (E_{\alpha})
=&-\sum_{j=1}^m \frac{1}{1+\lambda_j^2}({R}(E_{\alpha}, E_j, E_j, V){+R}(E_{\alpha}, E_{j^*}, E_{j^*},V))\\
&-\sum_{\beta=2m+1}^n {R}(E_{\alpha}, E_\beta, E_\beta, V).
\end{split}
\end{equation}
Let $G_{ij}^k=g(\nabla_{E_i}E_j,E_k)$ describe the components of covariant differentiation on this frame field. We then have
$$(\nabla V)_i^j=G_{in}^j\quad\text{and}\quad G_{ij}^k=-G_{ik}^j\,.$$
From Lemma 12 in \cite{GilLli1}, for a unit Killing vector field $V$, the components of the
curvature tensor are given by
\begin{equation}\label{Rijkn}
\begin{split}
R_{jikn}=& -E_i ((\nabla V)_j^k)+E_j ((\nabla V)_i^k) \\
&+\sum_{l=1}^{n-1}\{-G_{il}^k(\nabla V)_j^l +G_{jl}^k (\nabla V)_i^l
+G_{ij}^l (\nabla V)_l^k -G_{ji}^l (\nabla V)_l^k \}.
\end{split}
\end{equation}
Using \eqref{rho i} and applying \eqref{Rijkn}, we obtain
\begin{equation}\nonumber
\begin{split}
\tilde\rho_V (E_i)
=&-\frac{1}{1+\lambda_i^2}\sum_{j=1}^m \frac{1}{1+\lambda_j^2}\{\lambda_i G_{ji^*}^j -\lambda_j G_{ij^*}^j
+\lambda_j G_{ji}^{j^*} -\lambda_j G_{ij}^{j^*}\\
&\qquad\qquad\qquad +E_{j^*}(\lambda_j g_{ij}) +\lambda_i G_{j^* i^*}^{j^*}
 +\lambda_j G_{ij}^{j^*} -\lambda_j G_{j^* i}^j +\lambda_j G_{ij^*}^j\}\\
&+\frac{\lambda_i}{1+\lambda_i^2}\sum_{j=1}^m \frac{\lambda_j}{1+\lambda_j^2}\{-E_{j^*}(\lambda_i g_{ij})
-\lambda_j G_{jj}^{i^*} -\lambda_j G_{j^* j^*}^{i^*} -\lambda_i G_{jj^*}^i +\lambda_i G_{j^* j}^i\}\\
&-\frac{1}{1+\lambda_i^2} \sum_{\beta=2m+1}^{n} \lambda_i G_{\beta i^*}^\beta\,.
\end{split}\end{equation}
This yields:
\begin{equation}\label{rho i-2}
\begin{split}
\tilde\rho_V (E_i)
=&\frac{\lambda_i}{1+\lambda_i^2}\sum_{j=1}^m \frac{1}{1+\lambda_j^2}\{(-1+\lambda_j^2)G_{ji^*}^j +(-1+\lambda_j^2)G_{j^* i^*}^{j^*}\}\\
&+\frac{1}{1+\lambda_i^2}\sum_{j=1}^m \frac{\lambda_j}{1+\lambda_j^2}\{(-1+\lambda_i^2)G_{ji}^{j^*} +(1-\lambda_i^2)G_{j^* i}^{j}\}\\
&-\frac{\lambda_i }{1+\lambda_i^2} \sum_{\beta=2m+1}^{n} G_{\beta i^*}^\beta
-\frac{1}{1+\lambda_i^2} E_{i^*}(\lambda_i)
\end{split}
\end{equation}
Similarly, from \eqref{Rijkn}, and \eqref{rho i*}, we obtain
\begin{equation}\nonumber
\begin{split}
\tilde\rho_V (E_i^*)
=&-\frac{1}{1+\lambda_i^2}\sum_{j=1}^m \frac{1}{1+\lambda_j^2}\{-E_{j}(\lambda_i g_{ij})-\lambda_i G_{ji}^j -\lambda_j G_{i^* j^*}^j
+\lambda_j G_{ji^*}^{j^*}\\
&\qquad\qquad\qquad -\lambda_j G_{i^* j}^{j^*} -\lambda_i G_{j^* i}^{j^*}
 +\lambda_j G_{i^* j}^{j^*} -\lambda_j G_{j^* i^*}^j +\lambda_j G_{i^* j^*}^j\}\\
&-\frac{\lambda_i}{1+\lambda_i^2}\sum_{j=1}^m \frac{\lambda_j}{1+\lambda_j^2}\{-E_{j}(\lambda_j g_{ij})
-\lambda_j G_{jj}^{i} -\lambda_j G_{j^* j^*}^{i} +\lambda_i G_{jj^*}^{i^*} -\lambda_i G_{j^* j}^{i^*}\}\\
&+\frac{1}{1+\lambda_i^2} \sum_{\beta=2m+1}^{n} \lambda_i G_{\beta i}^\beta\,.
\end{split}
\end{equation}
This simplifies to become
\begin{equation}\label{rho i*-2}
\begin{split}
\tilde\rho_V (E_i^*)
=&-\frac{\lambda_i}{1+\lambda_i^2}\sum_{j=1}^m \frac{1}{1+\lambda_j^2}\{(-1+\lambda_j^2)G_{ji}^j +(-1+\lambda_j^2)G_{j^* i}^{j^*}\}\\
&+\frac{1}{1+\lambda_i^2}\sum_{j=1}^m \frac{\lambda_j}{1+\lambda_j^2}\{(-1+\lambda_i^2)G_{ji^*}^{j^*} +(1-\lambda_i^2)G_{j^* i^*}^{j}\}\\
&+\frac{\lambda_i }{1+\lambda_i^2} \sum_{\beta=2m+1}^{n} G_{\beta i}^\beta
+\frac{1}{1+\lambda_i^2} E_{i}(\lambda_i)\,.
\end{split}
\end{equation}
From \eqref{rho alpha}, we also have that:
\begin{eqnarray}\tilde\rho_V (E_\alpha)
&=&-\sum_{j=1}^m \frac{1}{1+\lambda_j^2}\{-\lambda_j G_{\alpha j^*}^{j}+\lambda_j G_{j\alpha}^{j^*}-\lambda_j G_{\alpha j}^{j^*}
+\lambda_j G_{\alpha j}^{j^*}-\lambda_j G_{j^* \alpha}^{j}+\lambda_j G_{\alpha j^*}^{j}\}\nonumber\\
&=&\sum_{j=1}^m \frac{\lambda_j}{1+\lambda_j^2} (G_{j^* \alpha}^j -G_{j\alpha}^{j^*})
\label{rho alpha-2}\end{eqnarray}
Comparing \eqref{e_i} $\sim$ \eqref{e_alpha} and \eqref{rho i-2} $\sim$ \eqref{rho alpha-2},
we see that \eqref{e_i} is not equal to \eqref{rho i-2} and \eqref{e_i*} is not equal to \eqref{rho i*-2}.
Also \eqref{e_alpha} is not equal to \eqref{rho alpha-2}, that is, it is impossible to obtain Theorem \ref{Gilthm}.

For this reason we shall, instead, use the definition of $\tilde\rho_V (X)$ which is given
in Theorem \ref{corr}. Then we have the relation
\begin{equation}\label{rho i-1}
\begin{split}
\tilde\rho_V (E_i)
=&\frac1{1+\lambda_i^2}\sum_{j=1}^m \frac{1}{1+\lambda_j^2}({R}(E_i, E_j, E_j, V){+R}(E_i, E_{j^*}, E_{j^*},V))\\
&+\frac{\lambda_i}{1+\lambda_i^2}\sum_{j=1}^m \frac{\lambda_j}{1+\lambda_j^2}{R}(E_{j^*}, E_j, E_{i^*}, V)\\
&+\frac1{1+\lambda_i^2}\sum_{\beta=2m+1}^n {R}(E_i, E_\beta, E_\beta, V),
\end{split}
\end{equation}
the relation
\begin{equation}\label{rho i*-1}
\begin{split}
\tilde\rho_V (E_{i^*})
=&\frac1{1+\lambda_i^2}\sum_{j=1}^m \frac{1}{1+\lambda_j^2}(R(E_{i^*}, E_j, E_j, V)+R(E_{i^*}, E_{j^*}, E_{j^*},V))\\
&-\frac{\lambda_i}{1+\lambda_i^2}\sum_{j=1}^m \frac{\lambda_j}{1+\lambda_j^2}{R}(E_{j^*}, E_j, E_i, V)\\
&+\frac1{1+\lambda_i^2}\sum_{\beta=2m+1}^n R(E_{i^*}, E_\beta, E_\beta, V),
\end{split}
\end{equation}
and the relation
\begin{equation}\label{rho alpha-1}
\begin{split}
\tilde\rho_V (E_{\alpha})
=&\sum_{j=1}^m \frac{1}{1+\lambda_j^2}({R}(E_{\alpha}, E_j, E_j, V){+R}(E_{\alpha}, E_{j^*}, E_{j^*},V))\\
&+\sum_{\beta=2m+1}^n {R}(E_{\alpha}, E_\beta, E_\beta, V).
\end{split}
\end{equation}
Using the Lemma 12 \cite{GilLli1} and applying Equations (\ref{rho
i-1}) -- (\ref{rho alpha-1}), we obtain:
\begin{equation}\nonumber
\begin{split}
\tilde\rho_V (E_i)
=&\frac{1}{1+\lambda_i^2}\sum_{j=1}^m \frac{1}{1+\lambda_j^2}\{\lambda_i G_{ji^*}^j -\lambda_j G_{ij^*}^j
+\lambda_j G_{ji}^{j^*} -\lambda_j G_{ij}^{j^*}\\
&\qquad\qquad\qquad +E_{j^*}(\lambda_j g_{ij}) +\lambda_i G_{j^* i^*}^{j^*}
 +\lambda_j G_{ij}^{j^*} -\lambda_j G_{j^* i}^j +\lambda_j G_{ij^*}^j\}\\
&+\frac{\lambda_i}{1+\lambda_i^2}\sum_{j=1}^m \frac{\lambda_j}{1+\lambda_j^2}\{-E_{j^*}(\lambda_i g_{ij})
-\lambda_j G_{jj}^{i^*} -\lambda_j G_{j^* j^*}^{i^*} -\lambda_i G_{jj^*}^i +\lambda_i G_{j^* j}^i\}\\
&+\frac{1}{1+\lambda_i^2} \sum_{\beta=2m+1}^{n} \lambda_i G_{\beta i^*}^\beta\,.
\end{split}\end{equation}
Consequently
\begin{equation}\nonumber
\begin{split}
\tilde\rho_V (E_i)
=&\frac{\lambda_i}{1+\lambda_i^2}\sum_{j=1}^m \frac{1}{1+\lambda_j^2}\{(1+\lambda_j^2)G_{ji^*}^j +(1+\lambda_j^2)G_{j^* i^*}^{j^*}\}\\
&+\frac{1}{1+\lambda_i^2}\sum_{j=1}^m \frac{\lambda_j}{1+\lambda_j^2}\{(1+\lambda_i^2)G_{ji}^{j^*} -(1+\lambda_i^2)G_{j^* i}^{j}\}\\
&+\frac{\lambda_i }{1+\lambda_i^2} \sum_{\beta=2m+1}^{n} G_{\beta i^*}^\beta
+\frac{1-\lambda_i^2}{(1+\lambda_i^2)^2} E_{i^*}(\lambda_i).
\end{split}
\end{equation}
This yields:
\begin{equation}\label{rho_i}
\begin{split}
\tilde\rho_V (E_i)
=&\frac{\lambda_i}{1+\lambda_i^2}\{\sum_{j=1}^m G_{ji^*}^j +\sum_{j=1}^m G_{j^* i^*}^{j^*} +\sum_{\beta=2m+1}^{n} G_{\beta i^*}^\beta\}\\
&+\sum_{j=1}^m \frac{\lambda_j}{1+\lambda_j^2}(G_{ji}^{j^*}-G_{j^* i}^{j}) +\frac{1-\lambda_i^2}{(1+\lambda_i^2)^2} E_{i^*}(\lambda_i)\\
=&\frac{\lambda_i}{1+\lambda_i^2}\sum_{j=1}^{n-1}G_{ji^*}^j+\sum_{j=1}^m \frac{\lambda_j}{1+\lambda_j^2}(G_{ji}^{j^*}-G_{j^* i}^{j}) +\frac{1-\lambda_i^2}{(1+\lambda_i^2)^2} E_{i^*}(\lambda_i),
\end{split}
\end{equation}
We continue the computation:
\begin{equation}\nonumber
\begin{split}
\tilde\rho_V (E_i^*)
=&\frac{1}{1+\lambda_i^2}\sum_{j=1}^m \frac{1}{1+\lambda_j^2}\{-E_{j}(\lambda_i g_{ij})-\lambda_i G_{ji}^j -\lambda_j G_{i^* j^*}^j
+\lambda_j G_{ji^*}^{j^*}\\
&\qquad\qquad\qquad -\lambda_j G_{i^* j}^{j^*} -\lambda_i G_{j^* i}^{j^*}
 +\lambda_j G_{i^* j}^{j^*} -\lambda_j G_{j^* i^*}^j +\lambda_j G_{i^* j^*}^j\}\\
&-\frac{\lambda_i}{1+\lambda_i^2}\sum_{j=1}^m \frac{\lambda_j}{1+\lambda_j^2}\{-E_{j}(\lambda_j g_{ij})
-\lambda_j G_{jj}^{i} -\lambda_j G_{j^* j^*}^{i} +\lambda_i G_{jj^*}^{i^*} -\lambda_i G_{j^* j}^{i^*}\}\\
&-\frac{1}{1+\lambda_i^2} \sum_{\beta=2m+1}^{n} \lambda_i G_{\beta i}^\beta,
\end{split}\end{equation}
so that:
\begin{equation}\label{rho_i*}
\begin{split}
\tilde\rho_V (E_i^*)
=&-\frac{\lambda_i}{1+\lambda_i^2}\sum_{j=1}^m \frac{1}{1+\lambda_j^2}\{(1+\lambda_j^2)G_{ji}^j +(1+\lambda_j^2)G_{j^* i}^{j^*}\}\\
&+\frac{1}{1+\lambda_i^2}\sum_{j=1}^m \frac{\lambda_j}{1+\lambda_j^2}\{(1+\lambda_i^2)G_{ji^*}^{j^*} -(1+\lambda_i^2)G_{j^* i^*}^{j}\}\\
&-\frac{\lambda_i }{1+\lambda_i^2} \sum_{\beta=2m+1}^{n} G_{\beta i}^\beta
-\frac{1-\lambda_i^2}{(1+\lambda_i^2)^2} E_{i}(\lambda_i)\\
=&-\frac{\lambda_i}{1+\lambda_i^2}\{\sum_{j=1}^m G_{ji}^j +\sum_{j=1}^m G_{j^* i}^{j^*} +\sum_{\beta=2m+1}^{n} G_{\beta i}^\beta\}\\
&+\sum_{j=1}^m \frac{\lambda_j}{1+\lambda_j^2}(G_{ji^*}^{j^*}-G_{j^* i^*}^{j}) -\frac{1-\lambda_i^2}{(1+\lambda_i^2)^2} E_{i}(\lambda_i)\\
=&-\frac{\lambda_i}{1+\lambda_i^2}\sum_{j=1}^{n-1}G_{ji}^j-\sum_{j=1}^m
\frac{\lambda_j}{1+\lambda_j^2}(G_{j^* i^*}^{j}-G_{ji^*}^{j^*})
-\frac{1-\lambda_i^2}{(1+\lambda_i^2)^2} E_{i}(\lambda_i).
\end{split}
\end{equation}
Finally, we have that
\begin{eqnarray}
\tilde\rho_V (E_\alpha)
&=&\sum_{j=1}^m \frac{1}{1+\lambda_j^2}\{-\lambda_j G_{\alpha j^*}^{j}+\lambda_j G_{j\alpha}^{j^*}-\lambda_j G_{\alpha j}^{j^*}
+\lambda_j G_{\alpha j}^{j^*}-\lambda_j G_{j^* \alpha}^{j}+\lambda_j G_{\alpha j^*}^{j}\}\nonumber\\
&=&-\sum_{j=1}^m \frac{\lambda_j}{1+\lambda_j^2} (G_{j^* \alpha}^j -G_{j\alpha}^{j^*}).\label{rho_alpha}
\end{eqnarray}
From Equation (\ref{e_i}) and (\ref{rho_i}), we see that
$$\frac{1}{f}\omega_V(E_i)-\rho_V(E_i) =\frac{2\lambda_i}{1+\lambda_i^2}\sum_{j=1}^m\frac{\lambda_j}{1+\lambda_j^2}E_{i^*}(\lambda_j).$$
Since
\begin{equation}\label{L1}
((L_V^{-1}\circ\nabla V)E_i)f
=-f\frac{2\lambda_i}{1+\lambda_i^2}\sum_{j=1}^m\frac{\lambda_j}{1+\lambda_j^2}E_{i^*}(\lambda_j),
\end{equation}
we have
$$\frac{1}{f}\omega_V(E_i)-\rho_V(E_i)=- \frac{1}{f}((L_V^{-1}\circ\nabla
V)E_i)f.$$ Similarly, since
\begin{eqnarray}\label{L2}
&&((L_V^{-1}\circ\nabla V)E_{i^*})f
=f\frac{2\lambda_i}{1+\lambda_i^2}\sum_{j=1}^m\frac{\lambda_j}{1+\lambda_j^2}E_{i}(\lambda_j)
\text{ and}\\
&&\label{L3}
((L_V^{-1}\circ\nabla V)E_{\alpha})f=0,
\end{eqnarray}
we have the same results for \eqref{e_i*} and \eqref{rho_i*} and for \eqref{e_alpha} and
\eqref{rho_alpha}, respectively. This completes the proof of the Theorem \ref{corr}.
\end{proof}

\section*{Acknowledgments}This work was supported by the National Research
Foundation of Korea (NRF) grant funded by the Korea government
(MEST) (2012-0005282).

\end{document}